\title[ ]{Proof of the HRT conjecture for almost every (1,3) configuration  }
\author{Wencai Liu}
\address[Wencai Liu]{Department of Mathematics, University of California, Irvine, California 92697-3875, USA} \email{liuwencai1226@gmail.com}
\keywords{HRT conjecture, configuration, continued fraction expansion.}
\thanks{{\em 2010 Mathematics Subject Classification.} Primary: 42A05. Secondary:11K60, 11K70.}
\newcommand{\R}{\mathbb{R}}
\newcommand{\Z}{\mathbb{Z}}
\newcommand{\C}{\mathbb{C}}
\theoremstyle{plain}
\newtheorem{theorem}{Theorem}[section]
\newtheorem{lemma}[theorem]{Lemma}
\theoremstyle{definition}
\newtheorem{remark}[theorem]{Remark}
\begin{document}


\begin{abstract}

We prove that    for almost  every  (1,3) configuration, there is no linear dependence between  the associated time-frequency translates of any $f\in L^2(\R)\backslash \{0\}$.
\end{abstract}
\maketitle

 \section{Introduction}

%

For a measurable function $f:\R\to \C$ and a subset $\Lambda\subset \R^2$, the associated Gabor system is
given by
\begin{equation*}
    \mathcal{G}(f,\Lambda)=\{M_bT_af:(a,b)\in \Lambda\},
\end{equation*}
where
\begin{equation*}
    M_bT_af(x)=e^{2\pi i bx}f(x-a).
\end{equation*}
We call $M_bT_af$ a time-frequency translate of $f$.

 The Heil-Ramanathan-Topiwala (HRT)  conjecture \cite{HRT96} asserts    that   finite  Gabor systems in $L^2(\R)$ are
linearly independent (also see \cite{heil2006}).  That is

{\bf The HRT Conjecture:} Let $\Lambda \subset \R^2$ be a finite set.     Then there is no non-trivial  function $  f \in L^2(\R)$ such that
the associated Gabor system $ \mathcal{G}(f,\Lambda)$ is linearly  dependent in $L^2(\R)$.

Here are  some examples to show that the $L^2(\R)$ property of function $f$  is essential.
\begin{description}
  \item[1] For any  trigonometric polynomial $f$, there exists a  finite  subset  $\Lambda\subset \R^2$ such that the associated Gabor system $ \mathcal{G}(f,\Lambda)$ is linearly dependent.
  \item[2] Let $f(x)=\frac{1}{2^n}$ for $x\in[n-1,n)$. Then $f\in L^2(\R^+)$ but $f\notin L^2(\R)$. It is easy to see that $\{f(x+1),f(x)\}$ is linearly dependent.
\end{description}

Since the formulation  of the HRT conjecture,   some results were  obtained  (see   \cite{HRTHS}  and references therein)
under   further restrictions  on the behavior of  function $f(x)$ at $x=\infty$ \cite{BB14,BS13,HRT96,BS15} or the structure of  the time-frequency  translates  $\Lambda$ \cite{HRT96,Lin99,DG12,BS2010,Dem10}.
%
Recall that  we call $\Lambda  $ an $(n,m)$ configuration if
 there exist 2 distinct parallel lines containing   $\Lambda$ such that one of them contains exactly $n$ points
 of $\Lambda$, and the other one contains exactly $m$  points of $\Lambda$.
  The following results hold  without  restriction on $f\in L^2(\R)\backslash\{0\}$.
\begin{itemize}
  \item
  $ \mathcal{G}(f,\Lambda)$  is linearly independent if  $\# \Lambda \leq 3$    or $\Lambda$ is colinear  \cite{HRT96}.
  \item
  $ \mathcal{G}(f,\Lambda)$  is linearly independent if  $\Lambda$ is a finite subset of a translate of a
lattice in $\R^2$ \cite{Lin99}. See \cite{DG12,BS2010} for  alternative proofs.
\item
$ \mathcal{G}(f,\Lambda)$  is linearly independent if  $\Lambda$  is a (2,2) configuration \cite{Dem10,DZ12}.
\item
$ \mathcal{G}(f,\Lambda)$  is linearly independent if   $\Lambda$  is a (1,3) configuration  with  certain  arithmetic restriction \cite{Dem10}. See Theorem \ref{Knowntheorem}.
\end{itemize}

In this paper,   we consider   $(1,3)$ configurations.
In \cite{Dem10}, Demeter proved
\begin{theorem}  \label{Knowntheorem}
The HRT conjecture holds for  special $(1,3)$ configurations
\begin{equation*}
   \Lambda=\{ (0,0),(\alpha,0),(\beta,0),(0,1)\},
\end{equation*}
\begin{itemize}
  \item [(a)] if there exists $\gamma>1$ such that
  \begin{equation}\label{DG1}
    \liminf_{n\to\infty}n^{\gamma}\min\{||n\frac{\beta}{\alpha}||,||n\frac{\alpha}{\beta}||\}<\infty,
  \end{equation}

  \item [(b)]if at least one of $\alpha,\beta$ is rational.
\end{itemize}

\end{theorem}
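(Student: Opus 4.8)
The plan is to transfer the linear dependence to the Fourier side, where it becomes a one-step recursion, and then read off the growth of the solution from an equidistribution (ergodic) analysis of an integer orbit on a torus. Suppose, for contradiction, that some $f\in L^2(\R)\setminus\{0\}$ makes $\mathcal G(f,\Lambda)$ dependent, so that there exist constants $c_0,c_1,c_2,c_3$, not all zero, with
\begin{equation*}
   c_0 f(x)+c_1 f(x-\alpha)+c_2 f(x-\beta)+c_3 e^{2\pi i x}f(x)=0 \qquad \text{a.e.}
\end{equation*}
If $c_3=0$ the relation is supported on the line $b=0$ and the collinear case applies; if $c_1c_2=0$ we are left with at most three points. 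Hence we may assume $c_1c_2c_3\neq0$. Writing $g=\hat f\in L^2(\R)\setminus\{0\}$ and taking Fourier transforms turns each translation into a modulation and the modulation $e^{2\pi i x}$ into a unit shift, giving
\begin{equation*}
   q(\xi)\,g(\xi)=-c_3\,g(\xi-1),\qquad q(\xi):=c_0+c_1 e^{-2\pi i\alpha\xi}+c_2 e^{-2\pi i\beta\xi}.
\end{equation*}

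First I would pass to orbit sequences. For a.e.\ $\xi$ the sequence $a_n(\xi):=g(\xi+n)$ lies in $\ell^2(\Z)$ by Fubini, and the recursion reads $a_{n+1}(\xi)=-c_3\,a_n(\xi)/q(\xi+n+1)$ wherever $q\neq0$, so that
\begin{equation*}
   \log\abs{a_n(\xi)}=\log\abs{a_0(\xi)}+\sum_{k=1}^{n}\bigl(\log\abs{c_3}-\log\abs{q(\xi+k)}\bigr).
\end{equation*}
Thus the growth of the orbit is controlled by the ergodic sums of $\log\abs{q}$. Since $q$ carries the frequencies $\alpha,\beta$, the points $(\alpha(\xi+k),\beta(\xi+k))$ run along the orbit of the translation by $(\alpha,\beta)$ on $\T^2$, which equidistributes (Weyl) precisely when $1,\alpha,\beta$ are rationally independent. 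Heuristically this yields $\tfrac1n\log\abs{a_n(\xi)}\to \log\abs{c_3}-L$ with $L:=\int_{\T^2}\log\abs{q}$, and running the recursion backwards produces the opposite sign. A nonzero limit therefore forces exponential growth on one side, which is incompatible with $a_\bullet(\xi)\in\ell^2(\Z)$ on the positive-measure set where $g\neq0$. So a genuine dependence can occur only in the borderline regime $L=\log\abs{c_3}$, and even then only if the fluctuations of the ergodic sums are square-summable on both sides.

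The crux is making this rigorous and closing the borderline case, and both difficulties are governed by the zeros of the trigonometric polynomial $q$ together with the Diophantine type of $\beta/\alpha$. Two points need care: the logarithmic singularities of $\log\abs{q}$ at the zero set mean that the Cesaro limit above is not automatic, and, since the starting points $(\alpha\xi,\beta\xi)$ fill only a single line in $\T^2$ (a Lebesgue-null set), Birkhoff's theorem does not apply directly and one needs quantitative equidistribution. The key input is exactly hypothesis (a): the condition $\liminf_n n^{\gamma}\min\{\norm{n\beta/\alpha},\norm{n\alpha/\beta}\}<\infty$ with $\gamma>1$ guarantees, through the continued fraction expansion of $\beta/\alpha$, that the orbit $\{\xi+k\}$ approaches the zero set of $q$ infinitely often at a super-polynomial rate. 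Each such close approach contributes a spike of order $\log(1/\delta)$ to the partial sum, and the super-polynomial rate makes these spikes large enough to force $\limsup_n\abs{a_n(\xi)}=\infty$, contradicting $a_\bullet(\xi)\in\ell^2(\Z)$ and hence forcing $g\equiv0$. Converting a Diophantine approximation rate into a guaranteed blow-up of the orbit is the step I expect to be the main obstacle, and it is the reason a purely qualitative equidistribution argument does not suffice.

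Finally, hypothesis (b) corresponds to the degenerate dynamics. If $\alpha$ or $\beta$ is rational, then $1,\alpha,\beta$ are rationally dependent, the relevant translation no longer equidistributes on all of $\T^2$ but on a proper subtorus, and a fixed power-product $\prod_{k} q(\xi+k)$ becomes genuinely periodic in $\xi$. The recursion then closes after finitely many steps and can be analysed explicitly, reducing the configuration to a lattice or collinear situation already covered by the known cases. Assembling the regimes where $q$ has no admissible zeros (so $\log\abs{q}$ is bounded and the mean argument is clean), the well-approximable case (a), and the rational case (b) yields the theorem.
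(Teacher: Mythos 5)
Your Fourier-side reduction is exactly the paper's starting point: the dependence relation becomes the first-order recursion $g(\xi+1)=P(\xi)g(\xi)$ with $P(\xi)=C_0+C_1e^{2\pi i\alpha\xi}+C_2e^{2\pi i\beta\xi}$, and the contradiction must come from the two-sided decay of $g(\xi+n)$ on a positive-measure set where $|g|$ is bounded above and below (Egoroff). From there, however, your mechanism diverges from the one that actually works (the paper's Theorem \ref{mainth} subsumes this statement and is proved in \S 2 and \S 4), and the divergent part has a genuine gap: you have misidentified what hypothesis (a) does. The condition $\liminf_n n^{\gamma}\|n\alpha/\beta\|<\infty$ is a well-approximability statement about the single number $\alpha/\beta$; its role is to produce integers $N_k$ for which $N_k\alpha/\beta$ is within $O(N_k^{-\gamma})$ of an integer, i.e.\ \emph{near-periods} of $P$: with $P_k=N_k/\beta$ one has $e^{2\pi i\beta(x+P_k)}=e^{2\pi i\beta x}$ exactly and $|e^{2\pi i\alpha(x+P_k)}-e^{2\pi i\alpha x}|=|e^{2\pi iN_k\alpha/\beta}-1|$ tiny. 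It says nothing about how closely the orbit $\{\xi+k\}$ approaches the zero set of $q$. Those approaches are governed by the inhomogeneous simultaneous approximation of the (at most two) zeros $(\gamma_1,\gamma_2)$ of $p(x,y)$ on $\T^2$ by the points $(\alpha(\xi+k),\beta(\xi+k))$, which depends on $\xi$ and on $(\alpha,\beta)$ jointly and is not controlled by $\|n\alpha/\beta\|$; indeed $q$ may have no real zeros at all, in which case your spike mechanism produces nothing while the theorem is still not immediate. So the key arithmetic input is being fed into the wrong slot.

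Two further steps would fail even if the spikes existed. In the borderline case a single term $-\log|q(\xi+k)|\geq\gamma\log k$ gives $|a_k|\geq k^{\gamma}|c_3|\,|a_{k-1}|$, which does not yield $\limsup_n|a_n|=\infty$ without a lower bound on the partial sums up to time $k-1$ (they could already be very negative). And your generic-case claim, that $L\neq\log|c_3|$ forces exponential growth on one side, requires Birkhoff averages of the unbounded $L^1$ observable $\log|q|$ to converge along orbits whose initial points all lie on the null line $\{(\alpha\xi,\beta\xi)\}\subset\T^2$; you flag this obstruction yourself, but the quantitative equidistribution needed to overcome it is precisely the hard part and is not supplied. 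The actual proof avoids ergodic averages of $\log|P|$ entirely: it picks $N_k$ as above with $\{N_k/\beta\}$ small (Lemma \ref{Le4}), shows that $\prod_{n=0}^{[P_k]-1}P(y+n)$ and $\prod_{n=0}^{[P_k]-1}P(x+n)$ are comparable for $x=y+P_k$ outside a small exceptional set --- the multiplicative error is $\exp\bigl(\varepsilon_k\sum_{n}|P(x+n)|^{-1}\bigr)$ with $\varepsilon_k$ the near-periodicity error, controlled by the upper bound $\sum_{n<Q}|P(x+n)|^{-1}\leq CQ\ln Q$ of Lemmas \ref{Le2}--\ref{Le5} --- and then reads this comparison as a forward product at $x_k$ against a backward product at $x_k'=x_k-\{P_k\}$, contradicting decay at $+\infty$ and $-\infty$ simultaneously. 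To salvage your route you would need a discrepancy estimate for $(n\alpha,n\beta)$ against singular test functions, a different and harder Diophantine problem than the one hypothesis (a) addresses.
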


It is known that  $\{x\in\R: \text{ there exists some } \gamma>1 \text{ such that } \liminf_{n\to\infty}n^{\gamma} ||n x||<\infty\}$ is a set of zero  Lebesgue measure (e.g., \cite[Theorem 32]{continue}). Thus Theorem \ref{Knowntheorem} holds for a measure zero subset of parameters, and it has been an open problem to extend it to other
$(1,3)$ configurations.

Our main  result  is 

\begin{theorem}\label{mainth}
The HRT conjecture holds for  special $(1,3)$ configurations
\begin{equation*}
    \Lambda=\{  (0,0),(\alpha,0),(\beta,0),(0,1)\},
\end{equation*}
\begin{itemize}
  \item [(a)] if
  \begin{equation}\label{DG2}
    \liminf_{n\to\infty}n\ln n\min\{||n\frac{\beta}{\alpha}||,||n\frac{\alpha}{\beta}||\}<\infty,
  \end{equation}

  \item [(b)] if at least one of $\alpha,\beta$ is rational.
\end{itemize}


\end{theorem}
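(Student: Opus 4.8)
The plan is as follows. Part (b) is already contained in Theorem \ref{Knowntheorem}(b), so I assume throughout that $\alpha$ and $\beta$ are irrational and prove part (a) under \eqref{DG2}. Suppose, for contradiction, that some $f\in L^2(\R)\setminus\{0\}$ makes $\mathcal{G}(f,\Lambda)$ dependent, so that $c_1 f(x)+c_2 f(x-\alpha)+c_3 f(x-\beta)+c_4 e^{2\pi i x}f(x)=0$ for constants not all zero. Taking the Fourier transform turns the time shifts into modulations and the modulation $(0,1)$ into a unit frequency shift, yielding the one–step recurrence $\hat f(\xi-1)=\phi(\xi)\hat f(\xi)$ with $\phi(\xi)=-\tfrac{1}{c_4}\bigl(c_1+c_2 e^{-2\pi i\alpha\xi}+c_3 e^{-2\pi i\beta\xi}\bigr)$. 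If $c_4=0$ the relation only involves the three collinear points $(0,0),(\alpha,0),(\beta,0)$, and if $c_2 c_3=0$ it involves at most three points in total; in either case $f=0$ by the known $\#\Lambda\le 3$ / collinear result \cite{HRT96}. Hence I may assume $c_2,c_3,c_4\neq 0$, so $\phi$ is a genuine trinomial weight.

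Restricting $\hat f$ to a coset $t+\Z$ and writing $F(k)=\hat f(t-k)$, the recurrence becomes $F(k+1)=\phi(t-k)F(k)$, so $|F(k)|=|F(0)|\,e^{T_k}$, where $T_k$ is the Birkhoff sum of $\log|\phi|$ along the orbit of the rotation by $(\alpha,\beta)$ on $\T^2$; note this orbit lies on the line of slope $\theta=\beta/\alpha$, which is exactly where the continued fraction of $\theta$ will enter. Since $\hat f\in L^2$, Fubini gives $\sum_{k\in\Z}|F(k)|^2<\infty$ for a.e.\ $t$, so $\sum_k e^{2T_k}<\infty$ and therefore $T_k\to-\infty$ as $k\to\pm\infty$ unless $F(0)=0$. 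The mean of the weight is $m=\int_{\T^2}\log\bigl|c_1+c_2 e^{2\pi i u}+c_3 e^{2\pi i v}\bigr|\,du\,dv-\log|c_4|$ (a logarithmic Mahler measure), and by the ergodic theorem $T_k\sim k\,m$. If $m\neq 0$ then $T_k\to+\infty$ in one of the two directions, which is incompatible with $\sum_k e^{2T_k}<\infty$; hence $F(0)=0$ for a.e.\ $t$ and $\hat f\equiv 0$. This disposes of the non-critical case with no arithmetic hypothesis.

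The whole difficulty is the critical case $m=0$, and this is where \eqref{DG2} is used. Here $T_k$ has zero drift, but $\log|\phi|$ has logarithmic singularities along the curve $\{c_1+c_2 e^{2\pi i u}+c_3 e^{2\pi i v}=0\}$, so the Birkhoff sums are pushed downward each time the orbit approaches this curve, and the question is whether these downward spikes can accumulate to $-\infty$. I would fix a convergent $p_n/q_n$ of $\theta$ for which $q_n\ln q_n\,\|q_n\theta\|$ stays bounded — such $n$ exist precisely by \eqref{DG2} — and analyze the length-$q_n$ blocks of the product as a perturbation of the rational model $\theta=p_n/q_n$, where the cocycle is exactly $q_n$-periodic and the boundedness underlying the known rational case of Theorem \ref{Knowntheorem}(b) is available. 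The goal is to show $\limsup_{k\to+\infty}T_k>-\infty$ for a.e.\ $t$: then $\sum_k e^{2T_k}=\infty$ forces $F(0)=0$, and again $\hat f\equiv 0$, contradicting $f\neq 0$.

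The main obstacle, and the only place the improvement over Theorem \ref{Knowntheorem} lives, is the quantitative control of these near-singular contributions. One must bound the total deviation of $T_{q_n}$ from its (vanishing) mean by splitting it into a drift term of size $O(q_n\|q_n\theta\|)$, coming from the mismatch between $\theta$ and $p_n/q_n$, and a term collecting the logarithmic singularities, whose size I expect to be only $O(\ln q_n)$ per block once the orbit points near the singular curve are organized by the three-distance theorem. Multiplying, the error is $O\bigl(q_n\ln q_n\,\|q_n\theta\|\bigr)$, which tends to $0$ exactly under \eqref{DG2}; the extra room needed in \cite{Dem10} came from estimating the singular term by a power $q_n^{\gamma-1}$ instead of $\ln q_n$, which is what forced $\gamma>1$. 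Carrying out this sharp summation along the continued fraction denominators of $\theta$ — rather than the routine reductions above — is where the real work lies.
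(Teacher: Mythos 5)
Your reduction to the recurrence $\hat f(\xi-1)=\phi(\xi)\hat f(\xi)$ with all three coefficients nonzero matches the paper's setup, and your disposal of the nonzero-mean case is plausible in outline (though note: the zero set of $c_1+c_2e^{2\pi iu}+c_3e^{2\pi iv}$ in $[0,1)^2$ is at most two points, not a curve; the relevant invariant measure is Haar measure on the orbit closure, which is a proper subtorus when $1,\alpha,\beta$ are rationally dependent, so the mean is not in general the $\T^2$-Mahler measure; and the a.e.-$t$ conclusion needs an extra argument since the starting points $(\alpha t,\beta t)$, $t\in[0,1)$, form a null subset of $\T^2$). But the theorem lives entirely in your ``critical case,'' and there you have stated a goal rather than proved it. Showing $\limsup_{k\to+\infty}T_k>-\infty$ for a.e.\ $t$ is a one-sided lower bound on Birkhoff sums of a function with logarithmic singularities over an irrational rotation; nothing in your sketch bounds the rational-model block sums from below (for the periodic cocycle with frequency $p_n/q_n$ the product over a period still depends on the starting point and can be small), and \eqref{DG2} only gives boundedness of $q_n\ln q_n\,\|q_n\theta\|$ along a subsequence, not convergence to $0$. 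The key step is missing, and it is not the step the paper takes.

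The paper proves something weaker than your target, and that is precisely the point: it never controls a single Birkhoff sum one-sidedly. Instead (Lemma \ref{Le4}) it picks $N_k=m_{n_k}q_{n_k}$ with $\|N_k\alpha/\beta\|\le C/(N_k\ln N_k)$ and $\{N_k/\beta\}\le s$, sets $P_k=N_k/\beta$, and compares the product $\prod_{n<[P_k]}P(\cdot+n)$ at two paired points $x$ and $y=x-P_k$: since $\beta P_k\in\Z$ the factor $e^{2\pi i\beta x}$ is unchanged, while $e^{2\pi i\alpha x}$ moves by only $C/(P_k\ln P_k)$, so the ratio of the two products is at most $\exp\bigl(\tfrac{C}{P_k\ln P_k}\sum_n|P(x+n)|^{-1}\bigr)$, which is bounded using the new estimate $\sum_{n<Q}|P(x+n)|^{-1}\le CQ\ln Q$ off a small exceptional set (Lemma \ref{Le5}, resting on the sharpened Diophantine Lemma \ref{Le3}; this is exactly where $n\ln n$ replaces Demeter's $n^{\gamma}$). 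Feeding the pair $x_k\in S$, $x_k'=x_k-\{P_k\}\in S$ into the forward and backward iterations of the recurrence yields $|f(x_k+[P_k])\,f(x_k'-[P_k])|\ge d^2/C$, contradicting decay at $+\infty$ and $-\infty$ simultaneously; the drift (your $m$) cancels identically in the ratio, so no case distinction on the mean is needed. To salvage your route you would have to either prove the one-sided Birkhoff bound (a substantially stronger claim than the theorem requires) or switch to this two-point cancellation.
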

\begin{remark}
(b) of Theorem \ref{mainth} is the same  statement as (b) in Theorem \ref{Knowntheorem}. We list here for completeness.
\end{remark}
It is well known that $\{x\in\R:  \liminf_{n\to\infty}n\ln n ||n x||<\infty\}$ is a set of full  Lebesgue measure (e.g.,   \cite[Theorem 32]{continue}).   Then using  metaplectic transformations, we have the following theorem
\begin{theorem}\label{13almost}
Given any line $l$  in $\R^2$, let $(a_2,b_2)$  and $(a_3,b_3)$ be any two points   lying  in  $l$. Let $(a_1,b_1)$ be an  any point not lying in $l$.
Then  for almost every point  $(a_4,b_4)$ in $l$,
the HRT conjecture holds for the configuration $\Lambda=\{(a_1,b_1),(a_2,b_2),(a_3,b_3),(a_4,b_4)\}$.
\end{theorem}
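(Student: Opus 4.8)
\emph{Proof proposal.} The plan is to reduce the general $(1,3)$ configuration in Theorem~\ref{13almost} to the normalized family $\{(0,0),(\alpha,0),(\beta,0),(0,1)\}$ of Theorem~\ref{mainth} by an area-preserving affine change of the time--frequency plane, and then to transport the full-measure Diophantine condition \eqref{DG2} back to the line $l$. I will use the two standard symmetries of the HRT property. First, linear (in)dependence of $\mathcal G(f,\Lambda)$ is unchanged if $\Lambda$ is translated by any $v\in\R^2$, because the operators $\{M_bT_a\}$ form a projective representation and precomposing all of them with a fixed unitary $M_{a_0}T_{b_0}$ preserves linear relations. Second, for each $A\in SL_2(\R)$ there is a unitary metaplectic operator $\mu(A)$ on $L^2(\R)$ such that conjugation by $\mu(A)$ sends $M_bT_a$ to a unimodular multiple of $M_{b'}T_{a'}$, where $(a',b')^{\mathsf T}=A(a,b)^{\mathsf T}$; hence $\mathcal G(f,\Lambda)$ is linearly dependent iff $\mathcal G(\mu(A)f,A\Lambda)$ is, and since $\mu(A)$ is a bijection of $L^2(\R)\setminus\{0\}$, the HRT conjecture holds for $\Lambda$ iff it holds for $A\Lambda$. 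Together these show that the HRT property is invariant under the affine symplectic group $\Lambda\mapsto A\Lambda+v$.

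Next I would build the normalizing map explicitly. Set the column vectors $v_3=(a_3,b_3)-(a_2,b_2)$ and $v_1=(a_1,b_1)-(a_2,b_2)$, and put $\alpha:=\det[\,v_3\ \ v_1\,]$, the signed area spanned by them; since $(a_1,b_1)\notin l$ the three points $(a_1,b_1),(a_2,b_2),(a_3,b_3)$ are not collinear, so $\alpha\neq 0$. Translate by $-(a_2,b_2)$ and apply the linear map $A:=\begin{pmatrix}\alpha&0\\0&1\end{pmatrix}[\,v_3\ \ v_1\,]^{-1}$. By construction $A v_3=(\alpha,0)^{\mathsf T}$ and $A v_1=(0,1)^{\mathsf T}$, while $\det A=\alpha/\det[\,v_3\ \ v_1\,]=1$, so $A\in SL_2(\R)$. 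Thus this affine symplectic map sends $(a_2,b_2)\mapsto(0,0)$, $(a_3,b_3)\mapsto(\alpha,0)$, $(a_1,b_1)\mapsto(0,1)$, and $l$ to the $x$-axis. Any point $(a_4,b_4)\in l$ satisfies $(a_4,b_4)-(a_2,b_2)=t\,v_3$ for a unique $t\in\R$ (an affine coordinate along $l$), and is therefore mapped to $(t\alpha,0)=:(\beta,0)$ with $\beta=t\alpha$.

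With this normalization the transformed configuration is exactly $\{(0,0),(\alpha,0),(\beta,0),(0,1)\}$ with $\beta/\alpha=t$ and $\alpha/\beta=1/t$, so Theorem~\ref{mainth}(a) applies as soon as
\begin{equation*}
\liminf_{n\to\infty} n\ln n\,\min\{\|n t\|,\|n/t\|\}<\infty .
\end{equation*}
Because $\min\{\|nt\|,\|n/t\|\}\le\|nt\|$, it suffices that $\liminf_{n\to\infty} n\ln n\,\|nt\|<\infty$, which by \cite[Theorem 32]{continue} holds for a.e.\ $t\in\R$. The assignment $(a_4,b_4)\mapsto t$ is an affine bijection of $l$ onto $\R$ and hence maps Lebesgue-null sets to Lebesgue-null sets; it therefore carries this full-measure set of admissible $t$ back to a full-measure (arc-length) subset of $l$, the only omitted parameters being $t\in\{0,1\}$, where $(a_4,b_4)$ equals $(a_2,b_2)$ or $(a_3,b_3)$ and no genuine $(1,3)$ configuration arises. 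Invoking the affine symplectic invariance above, the HRT conjecture then holds for the original $\Lambda$ for a.e.\ $(a_4,b_4)\in l$, which is Theorem~\ref{13almost}.

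The step I expect to be the only real obstacle is the careful justification of the metaplectic invariance together with the insistence that the normalizing linear map lie in $SL_2(\R)$ rather than merely $GL_2(\R)$: it is precisely the area-preserving constraint that forces $\alpha$ to equal the cross product $\det[\,v_3\ \ v_1\,]$, and it is the hypothesis $(a_1,b_1)\notin l$ that guarantees $\alpha\neq0$ and hence the invertibility needed to define $A$. Everything else is bookkeeping, since the substantive Diophantine and analytic content is already contained in Theorem~\ref{mainth}.
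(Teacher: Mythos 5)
Your proposal is correct and follows essentially the same route as the paper: normalize $\Lambda$ by a translation plus a metaplectic ($SL_2(\R)$) transformation to the form $\{(0,1),(0,0),(\alpha,0),(\beta,0)\}$ and then invoke Theorem \ref{mainth} together with the fact that $\{t:\liminf_n n\ln n\,\|nt\|<\infty\}$ has full measure. The paper's own proof is only a two-line sketch, so your explicit construction of the normalizing matrix and the transport of null sets along $l$ is a welcome, but not substantively different, elaboration.
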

\begin{proof}
By metaplectic transformations (see \cite{HRT96} for details), we can assume $l$ is  $x$-axis, and $(a_1,b_1)=(0,1)$, $(a_2,b_2)=(0,0)$ and $(a_3,b_3)=(\alpha,0)$. By Theorem \ref{mainth} and the fact that $\{x\in\R:  \liminf_{n\to\infty}n\ln n ||n x||<\infty\}$ is a set of full  Lebesgue measure,  we have
for almost every $\beta$, the HRT conjecture holds for $\Lambda=\{(0,1),(0,0),(\alpha,0),(\beta,0)\}$. We finish the proof.
\end{proof}
\section{The  framework  of the proof of Theorem \ref{mainth} }
 If $\frac{\alpha}{\beta}$ is a rational number,  it  reduces  to the  lattice case, which has been
proved \cite{Lin99}. Thus we also assume  $\frac{\alpha}{\beta}$ is irrational.

Assume Theorem \ref{mainth} does not hold. Then there exists some function $f$ satisfying
\begin{equation}\label{Gdecay}
   \lim_{|n|\to \infty} |f(x+n)|=0 \text{ a.e. } x\in[0,1)
\end{equation}
and nonzero $C_i\in \C$
such that
\begin{equation}\label{GRe}
   f(x+1)=f(x)(C_0+C_1e^{2\pi i \alpha x}+C_2e^{2\pi i\beta x})  \text{ a.e. } x\in \R.
\end{equation}
(Theorem \ref{mainth}  is  covered by the known results    if $C_i=0$ for some $i=0,1,2$)

Let
\begin{equation*}
    P(x)=C_0+C_1e^{2\pi i\alpha x}+C_2e^{2\pi i\beta x}.
\end{equation*}
For $n>0$, define
\begin{equation*}
    P_n(x)=\prod_{j=0}^{n} P(x+j),
\end{equation*}
\begin{equation*}
    P_{-n}(x)=\prod_{j=-n}^{-1} P(x+j).
\end{equation*}
Notice that $P(x+n)$  is an almost periodic function. Thus
for almost every $x\in[0,1)$,
\begin{equation*}
    P(x+n)\neq 0 \text{ for any } n\in \Z.
\end{equation*}

Iterating (\ref{GRe}) $n$ times on both sides (positive and  negative), we have
for $n>0$,
\begin{equation}\label{Gren}
    f(x+n)=P_n(x)f(x) \text{ a.e.  } x\in [0,1),
\end{equation}
and
\begin{equation}\label{Gre-n}
    f(x-n)=P_{-n}(x)^{-1}f(x) \text{ a.e.  } x\in [0,1).
\end{equation}
This implies that the value of function $f$ on $\R$ can be determined uniquely by its value on $[0,1)$ and function $P(x)$.

By Egoroff's   theorem and conditions (\ref{Gdecay}), \eqref{Gren} and \eqref{Gre-n}, there exists some positive Lebesgue  measure set $S\subset [0,1)$ and $d>0$,
such that
\begin{equation}\label{GF}
    \lim_{|n|\to \infty}f(x+n)=0 \text{ uniformly for } x\in S,
\end{equation}
\begin{equation}\label{GB}
   d <|f(x)| <d ^{-1} \text{  for all } x\in S,
\end{equation}
\begin{equation}\label{GT}
    f(x+n)=P_n(x)f(x) \text{ for  all } x\in S,
\end{equation}
and
\begin{equation}\label{GTI}
    f(x-n)=P_{-n}(x)^{-1}f(x) \text{ for all } x\in S.
\end{equation}
Demeter constructed a sequence $\{n_k\}\subset \Z^+$, such that
\begin{equation}\label{equ1}
   | P_{n_k}(x_k)P_{-n_k}(x^\prime_k)^{-1}|\geq C
\end{equation}
for some  $x_k,x^\prime_k\in S$. This   contradicts   (\ref{GF}), (\ref{GB}), (\ref{GT}) and (\ref{GTI}).

In order to complete the construction of  (\ref{equ1}), growth  condition (\ref{DG1}) was necessary in  \cite{Dem10}.
In the present paper, we follow the approach  of   \cite{Dem10}. The novelty of our work is in the subtler Diophantine  analysis.
This allows to make the restriction weak enough to obtain the result for a full Lebesgue measure set of parameters,
 and  significantly simplifies the  proof.

The rest of the paper is organized as follows.
In \S 3, we will give some basic  facts.
In \S 4, we give the proof of Theorem \ref{mainth}.
\section{Preliminaries}
We start with some basic notations.
 Denote by  $[x]$, $\{x\}$, $\|x\|$    the integer part, the  fractional part and the distance to the nearest integer of $x$. Let $\langle x\rangle$ be the unique number in $[-1/2,1/2)$ such that $x-\langle x\rangle$ is an integer.
 For  a measurable set $A\subset \R$,
  denote by $|A|$ its Lebesgue measure.

For  any irrational number $\alpha\in\R$, we define
$$ a_0=[\alpha],\alpha_0=\alpha,$$
and inductively for $k>0,$
\begin{equation}\label{A}
a_k=[ \alpha_{k-1}^{-1}], \alpha_k=\alpha_{k-1}^{-1}-a_k.
\end{equation}

We define
$$
\begin{array}{cc}
            p_0=a_0, & q_0=1, \\
            p_1=a_0a_1+1,& q_1=a_1 ,
          \end{array}
$$
and inductively,
\begin{eqnarray}
 \nonumber p_k &=& a_k p_{k-1}+p_{k-2}, \\
  q_k &=& a_k q_{k-1}+q_{k-2}.\label{B}
\end{eqnarray}
Recall that     $\{q_n\}_{n\in \mathbb{N}}$ is the sequence of   denominators of best approximations of irrational number $\alpha$,
since it satisfies
\begin{equation}\label{G29}
\forall 1\leq k <q_{n+1}, \| k\alpha\|\geq ||q_n\alpha||.
\end{equation}
Moreover, we also have the following estimate,

\begin{equation}\label{G210}
      \frac{1}{2q_{n+1}}\leq \|q_n\alpha\|\leq\frac{1}{q_{n+1}}.
\end{equation}
\begin{lemma}\label{Le3}
Let $ k_1< k_2< k_3<\cdots <k_m$ be a monotone integer sequence  such that $k_m-k_1< q_n$.
Suppose for some $x\in \R$
\begin{equation}\label{Gsmall}
    \min_{j=1,2\cdots,m}||k_j\alpha-x||\geq \frac{1}{4q_n}.
\end{equation}
Then
\begin{equation*}
    \sum _{j=1,2\cdots,m}\frac{1}{||k_j\alpha-x||}\leq Cq_n\ln q_n.
\end{equation*}
\end{lemma}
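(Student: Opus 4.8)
The plan is to convert the sum into a geometric packing estimate on the circle $\R/\Z$. The crucial structural fact is that the hypothesis $k_m-k_1<q_n$ forces the points $\{k_j\alpha\}$, $j=1,\dots,m$, to be \emph{uniformly separated} modulo $1$: for any $i\neq j$ one has $\|k_i\alpha-k_j\alpha\|=\|(k_i-k_j)\alpha\|$ with $1\le |k_i-k_j|\le k_m-k_1<q_n$, so by the best-approximation property \eqref{G29} (with $n-1$ in place of $n$) and the lower bound in \eqref{G210} we get
\begin{equation*}
  \|(k_i-k_j)\alpha\|\ \ge\ \|q_{n-1}\alpha\|\ \ge\ \frac{1}{2q_n}=:\delta.
\end{equation*}
Thus the $m$ points $\{k_j\alpha\}$ are distinct and pairwise $\delta$-separated on the circle of circumference $1$, which already yields the a priori bound $m\le 1/\delta=2q_n$.

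Next I would organise the points by their distance to $x$. Writing each $y_j=\{k_j\alpha\}$, split the index set according to whether $y_j$ lies on the clockwise or the counterclockwise arc from $x$ (that is, according to the sign of $\langle y_j-x\rangle$). On either side the points inherit the $\delta$-separation, and by hypothesis \eqref{Gsmall} the nearest one sits at distance at least $\tfrac14 q_n^{-1}=\delta/2$ from $x$. Consequently, if the points on a fixed side are listed in increasing distance from $x$, the $\ell$-th of them satisfies
\begin{equation*}
  \|k_j\alpha-x\|\ \ge\ \frac{\delta}{2}+(\ell-1)\delta\ =\ \Big(\ell-\tfrac12\Big)\delta .
\end{equation*}
This is the only place where \eqref{Gsmall} enters, and it is what prevents any single term from blowing up.

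Summing the reciprocals over one side then gives $\sum_{\ell}\frac{1}{(\ell-1/2)\delta}=\frac1\delta\sum_{\ell=1}^{m}\frac{1}{\ell-1/2}\le \frac{C}{\delta}\,(1+\ln m)$, and adding the two sides together with $\delta^{-1}=2q_n$ and $m\le 2q_n$ yields
\begin{equation*}
  \sum_{j=1}^{m}\frac{1}{\|k_j\alpha-x\|}\ \le\ C\,q_n\bigl(1+\ln(2q_n)\bigr)\ \le\ C\,q_n\ln q_n,
\end{equation*}
as claimed (the case of small $q_n$ being absorbed into the constant). The argument is essentially elementary once the separation is in hand; the only points demanding care are choosing the correct indices $n-1$ in \eqref{G29} and \eqref{G210} so that the separation comes out as $1/(2q_n)$ rather than $1/(2q_{n+1})$, and treating the wrap-around on the circle correctly when splitting the points into two arcs about $x$.
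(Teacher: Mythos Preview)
Your proof is correct and follows essentially the same route as the paper's: both arguments extract the pairwise separation $\|(k_i-k_j)\alpha\|\ge \|q_{n-1}\alpha\|\ge 1/(2q_n)$ from \eqref{G29}--\eqref{G210}, split the indices according to the sign of $\langle k_j\alpha-x\rangle$, and then bound each side by a harmonic-type sum using \eqref{Gsmall} to control the nearest term. Your write-up is in fact a bit more careful than the paper's about which continued-fraction index is used and about the wrap-around issue (the paper's displayed equality $|\langle k_i\alpha-x\rangle-\langle k_j\alpha-x\rangle|=\|(k_i-k_j)\alpha\|$ is really only an inequality $\ge$, which is all that is needed).
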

\begin{proof}
Recall that  $\langle x\rangle$ is the unique number in $[-1/2,1/2)$ such that $x-\langle x\rangle$ is an integer. Thus $||x||=|\langle x\rangle|$.
In order to prove the Lemma, it suffices to show that
\begin{equation}\label{Gsmallmay29}
    \sum _{j=1,2\cdots,m}\frac{1}{|\langle k_j\alpha-x\rangle|}\leq Cq_n\ln q_n.
\end{equation}
Let $S^+=\{j: j=1,2,\cdots,m , \langle k_j\alpha-x\rangle>0\}$.
Let $j_0^+$ be such that $j_0^+\in S^+$,  and
\begin{equation}\label{Gmay111}
   \langle k_{j_0^+}\alpha-x\rangle=\min_{j\in S^+}\langle k_j\alpha-x\rangle.
\end{equation}
By (\ref{G29}) and (\ref{G210}), one has for $ i\neq j $  and  $ i,j\in S^+$,
\begin{equation*}
 |\langle k_i\alpha-x\rangle-\langle k_j\alpha-x\rangle|  = ||(k_i\alpha-x)-(k_j\alpha-x)||\geq \frac{1}{2q_n}.
\end{equation*}
It implies the gap   between any two points $ \langle k_i\alpha-x \rangle$ and $\langle k_j\alpha-x \rangle$ with $ i,j\in S^+$ is larger than $\frac{1}{2q_n}$.
See the following figure.
\begin{center}
\begin{tikzpicture}

\draw [->](-7,0)--(7,0);
\draw [-](0,0)--(0,-0.1);
\node [above] at (-4,0){ $ \langle k_{j_0^+}-\alpha \rangle$};
\node [below]at (1,0){ $\geq \frac{1}{2q_n}$};
\node [below]at (3,0){ $\geq \frac{1}{2q_n}$};
\node [below]at (5,0){ $\geq \frac{1}{2q_n}$};
\node [below]at (-1,0){ $\geq \frac{1}{2q_n}$};
\node [below]at (-3,0){ $\geq \frac{1}{2q_n}$};
\node [below]at (-5,0){ $0$};
\node [above] at (2,0){ $\langle k_{j_2}-\alpha \rangle$};
\node [above] at (-2,0){ $\langle k_{j_1}-\alpha \rangle$};

\draw [-](2,0)--(2,-0.1);
\draw [-](4,0)--(4,-0.1);
\draw [-](6,0)--(6,-0.1);
\draw [-](-2,0)--(-2,-0.1);
\draw [-](-4,0)--(-4,-0.1);
\draw [-](-6,0)--(-6,-0.1);

\end{tikzpicture}
\end{center}
It easy to see that the upper bound of  $ \sum _{j\in S^+}\frac{1}{||k_j\alpha-x||}$ is achieved if all the gaps are exactly  $ \frac{1}{2q_n}$.
In this case, the gap between the  $i$th closest points of $\langle k_j\alpha-x \rangle$ with $j\in S^+$   to $ \langle k_{j_0^+}\alpha-x \rangle$ is exactly $ \frac{i}{2q_n}$.
Thus by  (\ref{Gsmall}), we have
\begin{eqnarray}
 \sum _{j\in S^+}\frac{1}{||k_j\alpha-x||}&= &  \frac{1}{||k_{j_0^+}\alpha-x||} +\sum _{j\in S^+,j\neq j_0^+}\frac{1}{||k_{j}\alpha-x||}\nonumber \\
 &= &  \frac{1}{\langle k_{j_0}\alpha-x\rangle} +\sum _{j\in S^+,j\neq j_0^+}\frac{1}{\langle k_{j}\alpha-x\rangle}\nonumber \\
 &\leq & 4q_n+\sum_{1\leq j\leq q_n}  \frac{2q_n}{j}\nonumber \\
   &\leq& C q_n\ln q_n .\label{Gmay292}
\end{eqnarray}
Similarly, letting  $S^-=\{j: j=1,2,\cdots,m , \langle k_j\alpha-x\rangle<0\}$,
 one has
 \begin{equation}\label{Gmay291}
   \sum _{j\in S^-}\frac{1}{||k_j\alpha-x||}\leq C q_n\ln q_n.
 \end{equation}
 By \eqref{Gmay292} and \eqref{Gmay291}, we finish the proof.
\end{proof}
 Now we give two lemmas     which can be found in \cite{Dem10}.
 \begin{lemma}(\cite[Lemma 2.1]{Dem10})
\label{Le1}
Let $C_0,C_1,C_2\in \C\backslash \{0\}$. The polynomial $p(x,y)=C_0+C_1e^{2\pi i x}+C_2e^{2\pi i y}$ has at most two  real zeros $(\gamma_1^{(j)},\gamma_2^{(j)})\in [0,1)^2$, $j\in\{1,2\}$ and there  exists $t=t(C_0,C_1,C_2)\in \R\setminus \{0\}$ such that
\begin{equation}\label{jfaa1}
|p(x,y)|\geq C(C_0,C_1,C_2)\min_{j=1,2}(\| x-\gamma_1^j+t\langle y-\gamma_2^j\rangle\|+\| x-\gamma_1^j\|^2+\| y-\gamma_2^j\|^2),
\end{equation}
for each $x,y\in\R$.

\end{lemma}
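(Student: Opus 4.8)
The plan is to localize the estimate near the zeros of $p$ and control the rest by compactness, the crux being a second-order Taylor analysis at each zero. First I would count the zeros. Writing $z=e^{2\pi i x}$, $w=e^{2\pi i y}$ on the unit circle, the equation $p=0$ reads $C_1z+C_2w=-C_0$, so $C_0,C_1z,C_2w$ are three complex numbers of fixed moduli $|C_0|,|C_1|,|C_2|$ summing to zero; such a (possibly degenerate) triangle is determined up to congruence, and its two orientations give at most two solutions $(\gamma_1^{(j)},\gamma_2^{(j)})$. Reflecting across the line through the origin in the direction of $C_0$ fixes $C_0$ and preserves the relation, which shows the two zeros are linked: setting $A_j=C_1e^{2\pi i\gamma_1^{(j)}}$ and $B_j=C_2e^{2\pi i\gamma_2^{(j)}}$, one gets $A_2=\nu\overline{A_1}$ and $B_2=\nu\overline{B_1}$ with $\nu=C_0^2/|C_0|^2$, $|\nu|=1$. (If $p$ has fewer than two zeros the statement only simplifies; with no zero, $|p|$ is bounded below on the torus and \eqref{jfaa1} is trivial.)

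Since $p$ is $1$-periodic in both variables, I would reduce \eqref{jfaa1} to a neighborhood of each zero plus a compactness estimate: off fixed neighborhoods of the finitely many zeros $|p|\geq c>0$, while the right-hand side is at most $1$, so \eqref{jfaa1} holds for small $C$. Near a zero $(\gamma_1,\gamma_2)$, put $u=x-\gamma_1$, $v=y-\gamma_2$, use $C_0=-A-B$, and expand $e^{2\pi is}-1=2i\,e^{\pi is}\sin(\pi s)$ to obtain
\begin{equation*}
p=2\pi i(Au+Bv)-2\pi^2(Au^2+Bv^2)+O(|u|^3+|v|^3),
\end{equation*}
where $A=C_1e^{2\pi i\gamma_1}$, $B=C_2e^{2\pi i\gamma_2}$. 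Projecting the linear term onto the direction of $A$ gives, with $t=\operatorname{Re}(\overline{A}B)/|A|^2$, the bound $|Au+Bv|\geq |A|\,|u+tv|$; the symmetry $A_2=\nu\overline{A_1}$, $B_2=\nu\overline{B_1}$ makes $t$ the same at both zeros, so a single $t$ serves both. For small $u,v$ one has $\|u\|=|u|$ and $\langle v\rangle=v$, so near each zero the target is $|p|\gtrsim |u+tv|+u^2+v^2$, and since the competing zero sits at a fixed distance the $\min_j$ selects exactly this term.

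The estimate then splits according to whether $A,B$ are $\R$-linearly independent (a dichotomy that, by the symmetry above, holds simultaneously at both zeros). If they are independent, the real-linear map $(u,v)\mapsto Au+Bv$ is invertible, so $|p|\gtrsim |u|+|v|$, which dominates $|u+tv|+u^2+v^2$ for any fixed $t$; one simply chooses $t\neq0$. The delicate case is $B=\lambda A$ with $\lambda\in\R$, where $t=\lambda$ is forced and the linear term degenerates on the line $u+\lambda v=0$. Writing $r=u+\lambda v$, I would use
\begin{equation*}
\operatorname{Im}\!\big(p/(2\pi A)\big)=r+O(|u|^3+|v|^3),\qquad \operatorname{Re}\!\big(p/(2\pi A)\big)=-\pi\big(u^2+\lambda v^2\big)+O(|u|^3+|v|^3),
\end{equation*}
and split into the regime $|r|\geq K(u^2+v^2)$, where the imaginary part gives $|p|\gtrsim |r|\gtrsim |r|+u^2+v^2$, and the regime $|r|\leq K(u^2+v^2)$, where $u\approx-\lambda v$ forces $u^2+\lambda v^2=r^2-2\lambda rv+\lambda(\lambda+1)v^2$ to be comparable to $u^2+v^2$, so the real part gives $|p|\gtrsim u^2+v^2$.

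I expect the main obstacle to be this degenerate case, specifically verifying that the quadratic form itself does not vanish: the coefficient $\lambda(\lambda+1)$ must be nonzero. Here $\lambda\neq0$ since $B\neq0$, and $\lambda=-1$ would give $B=-A$ and hence $C_0=-A-B=0$, contradicting $C_0\neq0$; thus $\lambda(\lambda+1)\neq0$ and the quadratic term is genuinely nondegenerate. The remaining technical point is absorbing the cubic remainder: on a neighborhood of radius $\delta$ small enough (depending on $C_0,C_1,C_2$) the $O(|u|^3+|v|^3)$ terms are at most $C\delta(u^2+v^2)$, which are swallowed by the quadratic lower bound, and the final constant $C(C_0,C_1,C_2)$ also absorbs the separation between the two zeros.
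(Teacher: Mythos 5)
The paper does not prove this lemma at all: it is imported verbatim from Demeter \cite[Lemma 2.1]{Dem10}, so there is no in-paper argument to compare against. Your reconstruction is correct and follows essentially the same route as Demeter's original proof: count the zeros via the rigidity of a triangle with side lengths $|C_0|,|C_1|,|C_2|$ (two orientations, hence at most two zeros, linked by the reflection $\zeta\mapsto\nu\bar\zeta$ fixing $C_0$), then Taylor-expand $p=A(e^{2\pi iu}-1)+B(e^{2\pi iv}-1)$ to second order at each zero and split according to whether the differential $(u,v)\mapsto Au+Bv$ is nondegenerate. The checks that make the argument go through are all present and correct: the same $t$ works at both zeros because $\overline{A_2}B_2=\overline{\overline{A_1}B_1}$; in the degenerate case $B=\lambda A$ the quadratic form $u^2+\lambda v^2$ restricted to the near-kernel $u\approx-\lambda v$ has coefficient $\lambda(\lambda+1)$, which is nonzero precisely because $C_2\neq0$ rules out $\lambda=0$ and $C_0=-(1+\lambda)A\neq0$ rules out $\lambda=-1$; and the regime split $|u+\lambda v|\gtrless K(u^2+v^2)$ correctly absorbs the cubic remainders after shrinking the neighborhood. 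One small remark worth recording: the degenerate case $B=\lambda A$, $\lambda\in\R$, corresponds exactly to the collinear (tangent-circle) configuration, so it in fact occurs only when the two zeros coincide; this does not affect your argument but explains why the two branches of your dichotomy never interact.
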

\begin{remark}
In \eqref{jfaa1}, we assume $p(x,y)$  has two zeros.
If $p(x,y)$ has  one or no zeros,  we can proceed with our proof by  replacing  \eqref{jfaa1} with
\begin{equation*}
   |p(x,y)|\geq C(C_0,C_1,C_2)(\| x-\gamma_1+t\langle y-\gamma_2\rangle\|+\| x-\gamma_1\|^2+\| y-\gamma_2\|^2),
\end{equation*}
or
\begin{equation*}
   |p(x,y)|\geq C(C_0,C_1,C_2).
\end{equation*}

\end{remark}
\begin{lemma}(\cite[Lemma 4.1]{Dem10})\label{Le2}
Let $x_1,x_2,\ldots,x_N$ be $N$ not necessarily distinct real numbers.
Then for each $N\in \Z^+$ and each $\delta>0$, there exists a   set $E_{N,\delta}\subset [0,1)$  with $|E_{N,\delta}|\le \delta,$ such that
\begin{equation}
\label{equ2}
\sum_{n=1}^{N}\frac{1}{\|x-x_n\|}\leq C({\delta})N \log N,
\end{equation}
and
\begin{equation}
\label{equ3}
\sum_{n=1}^{N}\frac{1}{\|x-x_n\|^2} \leq C({\delta}) N^2,
\end{equation}
for each $x\in [0,1)\setminus E_{N,\delta}$.
\end{lemma}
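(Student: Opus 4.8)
The plan is to prove both estimates by a weak-type (distribution-function) argument: rather than bounding the two sums in \eqref{equ2} and \eqref{equ3} pointwise, I would control the measure of the set on which each is large. The only real obstruction is that $\int_0^1\|x-x_n\|^{-1}\,dx$ diverges logarithmically and $\int_0^1\|x-x_n\|^{-2}\,dx$ diverges like a power, so neither sum is integrable on $[0,1)$ and one cannot simply apply Markov's inequality. I would circumvent this by truncating each singularity at a scale $\eta$ calibrated so that the neighborhoods we discard have total measure comparable to $\delta$. Concretely, set $\eta=\delta/(6N)$ and let $B=\bigcup_{n=1}^{N}\{x\in[0,1):\|x-x_n\|<\eta\}$, so that $|B|\le 2N\eta=\delta/3$. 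On $[0,1)\setminus B$ one has $\|x-x_n\|\ge\eta$ for every $n$, so the genuine sums agree there with the truncated sums $\widetilde S_1(x)=\sum_{n}\max(\|x-x_n\|,\eta)^{-1}$ and $\widetilde S_2(x)=\sum_{n}\max(\|x-x_n\|,\eta)^{-2}$, and it then suffices to bound $\widetilde S_1,\widetilde S_2$ off a further set of small measure.

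For the first sum I would compute the average directly. Since the push-forward of Lebesgue measure on $[0,1)$ under $x\mapsto\|x-x_n\|$ is $2\,du$ on $[0,1/2]$,
\[
\int_0^1\frac{dx}{\max(\|x-x_n\|,\eta)}=2\int_0^{1/2}\frac{du}{\max(u,\eta)}=2+2\log\frac{1}{2\eta}\le C\log\frac{1}{\eta},
\]
whence $\int_0^1\widetilde S_1\le CN\log(1/\eta)\le C'(\delta)\,N\log N$, because $\log(1/\eta)=\log(6N/\delta)\lesssim\log N$ for $N$ large. By Markov's inequality the set $\{x:\widetilde S_1(x)>C_1(\delta)N\log N\}$ then has measure at most $\delta/3$ once $C_1(\delta)$ is chosen large enough depending only on $\delta$.

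The second sum is handled identically with the exponent raised to $2$: the same change of variables gives $\int_0^1\max(\|x-x_n\|,\eta)^{-2}\,dx=2\int_0^{1/2}\max(u,\eta)^{-2}\,du\le C/\eta$, so that $\int_0^1\widetilde S_2\le CN/\eta=CN\cdot 6N/\delta\le C''(\delta)N^2$. Markov's inequality again removes a set of measure $\le\delta/3$ on which $\widetilde S_2>C_2(\delta)N^2$. Taking $E_{N,\delta}$ to be the union of $B$ with these two level sets yields $|E_{N,\delta}|\le\delta$, and for every $x\notin E_{N,\delta}$ both $S_1=\widetilde S_1$ and $S_2=\widetilde S_2$ satisfy the claimed bounds with $C(\delta)=\max(C_1(\delta),C_2(\delta))$.

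The one point requiring care—and the step I expect to be the crux—is the calibration of the truncation scale $\eta$ against $\delta$ and $N$. The constraint $|B|\sim N\eta$ forces $\eta\lesssim\delta/N$, and it is exactly this choice that converts the harmless factor $\log(1/\eta)$ into the sharp $\log N$ in the first estimate, and the factor $1/\eta$ into the sharp $N$ (hence $N^2$ overall) in the second; any larger $\eta$ would shrink the integrals but enlarge $B$ past $\delta$, while any smaller $\eta$ would blow the integrals up. One must also verify that the constants $C_1(\delta),C_2(\delta)$ depend on $\delta$ alone and not on $N$, which is automatic here since the ratios $\log(6N/\delta)/\log N$ are bounded uniformly for $N$ large (the finitely many small $N$ being trivial once $B$ is discarded). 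It is worth noting that this is the unstructured counterpart of Lemma \ref{Le3}: there the points $k_j\alpha$ are kept apart by the Diophantine gap $1/(2q_n)$ and no exceptional set is needed, whereas here arbitrary coincidences among the $x_n$ are absorbed into $E_{N,\delta}$.
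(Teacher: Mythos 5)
Your proof is correct: the truncation at scale $\eta=\delta/(6N)$, the computation of $\int_0^1\max(\|x-x_n\|,\eta)^{-p}\,dx$ by pushing Lebesgue measure forward under $x\mapsto\|x-x_n\|$, and the Chebyshev/Markov step are exactly the standard weak-type argument, and this is essentially the proof given in the cited source \cite[Lemma 4.1]{Dem10} (the present paper states the lemma without proof). The only cosmetic caveat is that for $N=1$ the bound $C(\delta)N\log N$ degenerates, so $\log N$ should be read as $\max(\log N,1)$, which does not affect any application.
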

\section{Proof of Theorems \ref{mainth}}

In this section,  $q_k,p_k,a_k$ are always  the coefficients of  the continued fraction expansion of  $\frac{\alpha}{\beta}$ as given in \eqref{A} and \eqref{B}.
Then condition
(\ref{DG2}) holds iff
\begin{equation}\label{DG3}
    \limsup_{k}\frac{a_k}{\ln q_k}>0,
\end{equation}
and also iff
\begin{equation*}
    \limsup_{k}\frac{q_{k+1}}{q_k\ln q_k}>0.
\end{equation*}
\begin{lemma}\label{Le4}
Suppose $\frac{\alpha}{\beta}$ is irrational and satisfies condition (\ref{DG2}). Then for
any  $s\in(0,1)$,
  there exists a sequence $N_k$ such that
  \begin{itemize}
  \item [(i)]
  \begin{equation}\label{DG4}
    N_k=m_{n_k}q_{n_k}, m_{n_k}\leq C(s),
  \end{equation}
  \item [(ii)]
  \begin{equation}\label{DG5}
    ||N_k\frac{\alpha}{\beta}|| \leq \frac{C(s)}{N_k\ln N_k},
  \end{equation}

and
  \item [(iii)]
  \begin{equation}\label{DG6}
   \{\frac{N_k}{\beta}\} \leq s.
  \end{equation}
\end{itemize}
\end{lemma}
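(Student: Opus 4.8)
The plan is to build $N_k$ as a bounded multiple of a well-chosen continued fraction denominator $q_{n_k}$ of $\alpha/\beta$, so that (i) is automatic, (ii) comes for free from the growth of the $q_n$, and the entire difficulty is concentrated in the \emph{one-sided} fractional-part condition (iii), which I would extract by a pigeonhole argument whose only delicate point is the sign of the approximation.

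First I would record that, as already observed just before the lemma, condition (\ref{DG2}) is equivalent to (\ref{DG3}), i.e. $\limsup_k a_k/\ln q_k>0$. Hence there are a constant $c>0$ and an infinite set $\mathcal{N}=\{n_1<n_2<\cdots\}$ with $a_{n+1}\ge c\ln q_n$, so by (\ref{B}), $q_{n+1}=a_{n+1}q_n+q_{n-1}\ge c\,q_n\ln q_n$ for every $n\in\mathcal{N}$. Along this set I look for $N_k=m_{n_k}q_{n_k}$ with $1\le m_{n_k}\le C(s)$; then (i) holds by construction, and the whole problem becomes the choice of the bounded multiplier $m_{n_k}$.

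Next I would check that (ii) is essentially free for \emph{any} bounded multiplier. For $n\in\mathcal{N}$ and any integer $m\ge1$ one has $\|m q_n\alpha/\beta\|\le m\|q_n\alpha/\beta\|$, and by the upper bound in (\ref{G210}), $\|q_n\alpha/\beta\|\le 1/q_{n+1}\le 1/(c\,q_n\ln q_n)$. Writing $N=mq_n$ and using $N\ln N\ge m q_n\ln q_n$, this yields $\|N\alpha/\beta\|\le m/(c\,q_n\ln q_n)\le (m^2/c)\cdot 1/(N\ln N)$, which is $\le C(s)/(N\ln N)$ as soon as $m\le C(s)$. Thus (ii) holds simultaneously for every multiplier in the admissible range $1\le m\le C(s)$, and all that remains is to realize (iii) with such a multiplier for infinitely many $n\in\mathcal{N}$.

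Now for (iii): fixing $M=M(s)$ and considering, for a given $n\in\mathcal{N}$, the orbit $\{0,\{q_n/\beta\},\{2q_n/\beta\},\dots,\{Mq_n/\beta\}\}$ of the rotation by $\theta_n:=q_n/\beta$, the three-gap (pigeonhole) principle produces some $1\le m\le M$ with $\|m\theta_n\|\le 1/M\le s$. If in addition $\langle m\theta_n\rangle\ge0$, then $\{mq_n/\beta\}=\|mq_n/\beta\|\le s$ and (iii) holds; the obstruction is purely the one-sided requirement, i.e. guaranteeing the correct sign. Phrased through the best-approximation denominators $Q_0<Q_1<\cdots$ of $\theta_n$, whose signs $\langle Q_j\theta_n\rangle$ alternate, I need a positive-sign denominator $Q_{2i}\le M$ with $\|Q_{2i}\theta_n\|\le s$, and this exists with $Q_{2i}\le M(s)$ exactly when $\{q_n/\beta\}$ is \emph{not} trapped just below a rational of small denominator (the extreme bad case being $\{q_n/\beta\}$ just below an integer, where the first positive-sign denominator is $\approx 1/\|q_n/\beta\|$ and blows up, forcing $m\to\infty$). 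The heart of the matter is therefore to show, by Diophantine analysis, that for infinitely many $n\in\mathcal{N}$ the point $\{q_n/\beta\}$ avoids these left-neighborhoods of small rationals. Here I would exploit the identity forced by (ii): since $\|q_n\alpha/\beta\|$ is tiny on $\mathcal{N}$, one has $q_n/\beta=p_n/\alpha\pm\|q_n\alpha/\beta\|/\alpha$, so $\{q_n/\beta\}$ differs negligibly from $\{p_n/\alpha\}$; running the finer continued-fraction estimates of Lemma \ref{Le3} on the points $\{p_n/\alpha\}$ should let me select a subsequence of good indices landing in $G_M=\bigcup_{m\le M}\bigcup_j[j/m,\,j/m+s/m]$, and combining this with the previous paragraphs gives $N_k=m_{n_k}q_{n_k}$ satisfying (i)--(iii). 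I expect this sign/location control in (iii), achieved with a multiplier bounded \emph{uniformly} in $k$ by $C(s)$, to be the main obstacle and the precise place where the paper's "subtler Diophantine analysis" enters; by contrast, (i) and (ii) are routine.
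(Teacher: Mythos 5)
Your treatment of (i) and (ii) coincides with the paper's: select a subsequence $n_k$ realizing the $\limsup$ in (\ref{DG3}) so that $q_{n_k+1}\ge c\,q_{n_k}\ln q_{n_k}$, take $N_k=m_{n_k}q_{n_k}$ with $1\le m_{n_k}\le 1/s+1$, and observe via (\ref{G210}) that (ii) holds for \emph{any} bounded multiplier (you even correct a harmless index slip, since one needs $a_{n_k+1}\ge c\ln q_{n_k}$ rather than $a_{n_k}\ge c\ln q_{n_k}$). Where you diverge is (iii): the paper disposes of it in one clause, ``this can be done by the pigeonhole principle,'' with $m_{n_k}\le 1/s+1$. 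You are right that this is not a routine application of Dirichlet's box principle: pigeonhole yields only the symmetric bound $\|m q_{n_k}/\beta\|\le s$ for some bounded $m$, and the one-sided statement $\{m\theta\}\le s$ with $m$ bounded in terms of $s$ alone is false for general $\theta$ (take $\theta$ slightly below an integer, so that $\{m\theta\}=1-m(1-\{\theta\})$ stays near $1$ for all bounded $m$). So you have located a point that the paper passes over silently.

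However, the machinery you propose to close this gap is not a proof and is, I think, the wrong tool. The step ``running the finer continued-fraction estimates of Lemma \ref{Le3} on the points $\{p_n/\alpha\}$ should let me select a subsequence of good indices landing in $G_M$'' is unsubstantiated: the admissible indices $n_k$ form an arbitrary, possibly extremely sparse, infinite set dictated by the $\limsup$ in (\ref{DG3}), and nothing guarantees that $\{q_{n_k}/\beta\}$ (equivalently $\{p_{n_k}/\alpha\}$, up to the negligible error you identify) avoids the left neighborhoods of small-denominator rationals for infinitely many $k$. Lemma \ref{Le3} bounds sums of reciprocals over a full block of integers; it says nothing about where a prescribed sparse subsequence of an orbit lands, so your selection step is exactly the kind of claim that needs proof and does not get one. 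The economical repair is different: either pass to a subsequence of $k$ along which the Dirichlet multiplier satisfies $\langle m_k q_{n_k}/\beta\rangle\ge 0$, or, more robustly, settle for the two-sided bound $\|N_k/\beta\|\le s$ (which pigeonhole does give) and note that the only use of (iii), in the proof of Theorem \ref{mainth}, is to make $S$ overlap a translate of itself; when $\{P_k\}\ge 1-s$ one intersects $S$ with $S-(1-\{P_k\})$ and runs the identical argument with $[P_k]+1$ in place of $[P_k]$. In short: your diagnosis of the soft spot in (iii) is correct and worth recording, but your proposed cure is both unproved and far heavier than necessary; the paper's ``subtler Diophantine analysis'' lives in Lemmas \ref{Le3} and \ref{Le5}, not in Lemma \ref{Le4}.
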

\begin{proof}
By \eqref{DG3}, there exists a sequence  ${n_k}$ such that $ a_{n_k}\geq c \ln q_{n_k}$.
For any $s\in (0,1)$, let $m_{n_k}\in \Z^+$   be such that  $1\le m_{n_k}\le 1/s+1$ and  $N_k =m_{n_k}q_{n_k}$ satisfies (iii) (this can be done by the  pigeonhole principle). It is easy to check that $ N_k$ satisfies
condition (ii)  by the fact  $ a_{n_k}\geq c \ln q_{n_k}$.
\end{proof}
\begin{lemma}\label{Le5}
Let $C_0,C_1,C_2\in \C\backslash\{0\}$ and $\alpha,\beta$ be such that $\frac{\alpha}{\beta}$ is irrational. Let  $Q_k$ be a sequence such that $\gamma q_{n_k}\leq Q_k\leq \hat{\gamma} q_{n_k}$, where
$q_n$ is the continued fraction expansion of $\frac{\alpha}{\beta}$ and $\gamma,\hat{\gamma}$ are  constants.
Define
$$P(x)=C_0+C_1e^{2\pi i\alpha x}+C_2e^{2\pi i\beta x}.$$
Then for each  $\delta>0$, there exists a set $E_{Q_k,\delta}\subset [0,1)$ such that
$$|E_{Q_k,\delta}|<\delta$$
and
$$\sum_{n=0}^{Q_k-1}\frac{1}{|P(x+n)|}\leq C(\gamma,\hat{\gamma},\delta,C_0,C_1,C_2,\alpha,\beta) Q_k\ln Q_k$$
for each $x\in [0,1)\setminus E_{Q_k,\delta}$.
\end{lemma}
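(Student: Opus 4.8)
The plan is to reduce the two--dimensional quantity $1/|P(x+n)|$ to one--dimensional Diophantine sums governed by the continued fraction of $\frac{\alpha}{\beta}$, handle the bulk of the sum with Lemma \ref{Le3}, and remove the finitely many dangerous indices with an exceptional set built from Lemma \ref{Le2}. First I would apply Lemma \ref{Le1} with $(u,v)=(\alpha(x+n),\beta(x+n))$. Writing $A_n^j=\|\alpha(x+n)-\gamma_1^j+t\langle\beta(x+n)-\gamma_2^j\rangle\|$, $B_n^j=\|\alpha(x+n)-\gamma_1^j\|^2$, $C_n^j=\|\beta(x+n)-\gamma_2^j\|^2$, the bound $|P(x+n)|\ge C\min_{j}(A_n^j+B_n^j+C_n^j)$ gives $\frac{1}{|P(x+n)|}\le C^{-1}\sum_{j=1,2}\frac{1}{A_n^j+B_n^j+C_n^j}$, so it suffices to estimate the sum over $n$ for each fixed $j$ (the one-- or no--zero cases of the Remark after Lemma \ref{Le1} being easier). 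Fix $j$ and drop the superscript.

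The key algebraic observation is that whenever $\|\beta(x+n)-\gamma_2\|$ is not too large one writes $\beta(x+n)=\gamma_2+m_n+s_n$ with $m_n\in\Z$ and $s_n=\langle\beta(x+n)-\gamma_2\rangle$; since $\alpha(x+n)=\frac{\alpha}{\beta}\beta(x+n)$ one computes $A_n=\|\frac{\alpha}{\beta}m_n+c_0+(\frac{\alpha}{\beta}+t)s_n\|$ and $B_n=\|\frac{\alpha}{\beta}m_n+c_0+\frac{\alpha}{\beta}s_n\|^2$, where $c_0=\frac{\alpha}{\beta}\gamma_2-\gamma_1$. Thus, up to a perturbation controlled by $|s_n|$, the linear form $A_n$ is comparable to $\|\frac{\alpha}{\beta}m_n+c_0\|$, which is exactly the object Lemma \ref{Le3} controls. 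As $n$ runs over $\{0,\dots,Q_k-1\}$ the integers $m_n$ lie in an interval of length $\le\beta Q_k\le\beta\hat\gamma q_{n_k}$, which splits into $O(\beta\hat\gamma)=O(1)$ blocks of length $<q_{n_k}$; applying Lemma \ref{Le3} (with the irrational $\frac{\alpha}{\beta}$ and center $-c_0$) to each block, after discarding the $O(1)$ indices per block with $\|\frac{\alpha}{\beta}m_n+c_0\|<\frac{1}{4q_{n_k}}$, yields the main estimate $\sum 1/A_n\le Cq_{n_k}\ln q_{n_k}\le CQ_k\ln Q_k$. For indices with $\|\beta(x+n)-\gamma_2\|$ of order one the term $C_n$ is bounded below and each contributes only $O(1)$, for a total of $O(Q_k)$.

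Next I would construct $E_{Q_k,\delta}$ to neutralize the remaining dangerous indices, namely those for which the orbit $(\alpha(x+n),\beta(x+n))$ is close to the zero $(\gamma_1,\gamma_2)$, where $A_n$ is small yet the quadratic part $B_n+C_n$ dominates. For these the term is of size $1/(B_n+C_n)$, and I would bound the associated sums $\sum 1/\|\cdot\|$ and $\sum 1/\|\cdot\|^2$ by Lemma \ref{Le2}, applied (after rescaling into $[0,1)$) to the point sets coming from $\alpha(x+n)=\gamma_1$ and $\beta(x+n)=\gamma_2$, removing a set of measure $<\delta$ outside which the first sum is $\le C(\delta)Q_k\ln Q_k$ and the second is confined to the $O(1)$ closest approaches. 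Combining the three contributions gives the claimed bound with $C=C(\gamma,\hat\gamma,\delta,C_0,C_1,C_2,\alpha,\beta)$.

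The main obstacle is the coupling, through $s_n=\langle\beta(x+n)-\gamma_2\rangle$, between $A_n$ and the quadratic terms. A single fixed threshold separating ``$s_n$ small'' from ``$s_n$ large'' does not suffice: when $\|\frac{\alpha}{\beta}m_n+c_0\|$ is itself of borderline size $\sim 1/q_{n_k}$, the perturbation $(\frac{\alpha}{\beta}+t)s_n$ can dominate and cancel it, so $A_n+B_n+C_n$ degenerates to the quadratic scale $\sim\|\frac{\alpha}{\beta}m_n+c_0\|^2$ and the naive estimate overshoots to $Q_k^2$. I expect the resolution to require a dyadic decomposition in $|s_n|$: at scale $|s_n|\sim 2^{-p}$ one reinterprets $A_n$ as a linear form $\|\frac{\alpha}{\beta}m_n+c_0\pm(\frac{\alpha}{\beta}+t)2^{-p}\|$ with a shifted center and applies the Lemma \ref{Le3} mechanism scale by scale, the number of relevant scales being $O(\ln Q_k)$ once the exceptional set guarantees a lower bound $\gtrsim\delta/Q_k$ on the closest approach. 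Making this scale-by-scale Diophantine bookkeeping uniform, so that every scale contributes $O(Q_k)$ and the total stays at the sharp order $Q_k\ln Q_k$ rather than $Q_k^2$, is precisely the subtler Diophantine analysis that drives the improvement over \cite{Dem10}.
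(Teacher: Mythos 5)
You have the right ingredients (Lemmas \ref{Le1}, \ref{Le2}, \ref{Le3}) and your algebraic reduction $A_n=\bigl\|\tfrac{\alpha}{\beta}m_n+c_0+(\tfrac{\alpha}{\beta}+t)s_n\bigr\|$ is exactly the right computation, but your argument has a genuine gap that you yourself flag and do not close: the coupling term $(\tfrac{\alpha}{\beta}+t)s_n$. The missing idea is the dichotomy on whether $\alpha+t\beta=0$, i.e.\ whether the coefficient $\tfrac{\alpha}{\beta}+t$ of $s_n$ vanishes. When it vanishes, your reduction is exact, there is no perturbation at all, and Lemma \ref{Le3} applies cleanly to $\|\tfrac{\alpha}{\beta}m_n+c_0\|$ (this is the paper's Case 2, with the finitely many close approaches absorbed by the quadratic term $\|\alpha(x+n)-\gamma_1\|^2$ via \eqref{equ3}). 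When it does not vanish, the paper does not treat $s_n$ perturbatively at all: it substitutes $t\langle\beta(x+n)-\gamma_2\rangle=t\beta(x+n)-t\gamma_2-t[\beta(x+n)-\gamma_2]+mt$, so the whole linear form becomes $\|(\alpha+t\beta)x+y\|$ with $y$ ranging over a set of $O(Q_k)$ constants, and then the purely measure-theoretic Lemma \ref{Le2} in the free variable $x$ (slope $\alpha+t\beta\neq0$) gives the bound outside a set of measure $\delta/2$ --- no Diophantine analysis and no decomposition in $s_n$ is needed in this case.

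Your proposed repair --- a dyadic decomposition in $|s_n|$, applying Lemma \ref{Le3} at scale $|s_n|\sim 2^{-p}$ ``with a shifted center $\pm(\tfrac{\alpha}{\beta}+t)2^{-p}$'' --- does not work as stated: Lemma \ref{Le3} requires a single fixed center $x$, whereas within one dyadic scale the shift $(\tfrac{\alpha}{\beta}+t)s_n$ still varies over an interval of length $\sim 2^{-p}$, which for all but the last few scales is far larger than the $\tfrac{1}{2q_{n_k}}$ spacing on which the lemma's counting argument rests. Falling back on the quadratic term $C_n=s_n^2\sim 2^{-2p}$ gives $2^{2p}$ per index and $\sim 2^{-p}Q_k$ indices per scale, hence $2^pQ_k$ per scale and $Q_k^2$ in total --- precisely the overshoot you acknowledge. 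So the ``every scale contributes $O(Q_k)$'' claim is unsubstantiated, and without the $\alpha+t\beta$ dichotomy the proof does not close.
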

\begin{proof}
In order to make the proof simpler, we will use $C$ for constants  depending on  $\gamma,\hat{\gamma},\delta,C_0,C_1,C_2,\alpha,\beta$.

Let $(\gamma_1,\gamma_2)$ be a zero of the polynomial $p(x,y)=C_0+C_1e^{2\pi i x}+C_2e^{2\pi iy}$, and let $t$ be the real number given  by Lemma \ref{Le1}. Define
$$A_n(x):=\|\alpha (x+n)-\gamma_1+t\langle\beta (x+n)-\gamma_2\rangle\|+\|\alpha (x+n)-\gamma_1\|^2+\|\beta (x+n)-\gamma_2\|^2.$$
By Lemma \ref{Le1}, it suffices to find a set with $|E_{Q_k,\delta}|\le\delta,$ such that
\begin{equation}
\label{equ4}
\sum_{n=0}^{Q_k-1}\frac{1}{A_n(x)}\leq CQ_k\ln Q_k ,
\end{equation}
for each $x\in [0,1)\setminus E_{Q_k,\delta}$.

We distinguish between  two cases.

{\bf Case 1:}  $\alpha+t\beta\not=0$

In this case, one has
$$\|\alpha (x+n)-\gamma_1+t\langle \beta (x+n)-\gamma_2\rangle\|=\|(\alpha+t\beta)x+(\alpha+t\beta)n-\gamma_1-t\gamma_2-t[\beta (x+n)-\gamma_2]+mt\|,$$
where $m=-1$ if  $\{\beta (x+n)-\gamma_2\}>1/2$ and $m=0$ otherwise. We remind that $[\beta (x+n)-\gamma_2]$ is the integer part of $\beta (x+n)-\gamma_2$.

Note that the set
$$S:=\{(\alpha+t\beta)n-\gamma_1-t\gamma_2-t[\beta (x+n)-\gamma_2]+mt:\;x\in[0,1),\; 0\le n\le Q_k-1,\;m\in\{0,-1\}\}$$
has $O(Q_k)$ elements. By \eqref{equ2}  there exists some  $E^1_{Q_k,\delta}$ with $|E^1_{Q_k,\delta}|<\delta/2$  such that
$$ \sum_{y\in S}\frac{1}{\|(\alpha+t\beta)x+y\|}\leq C Q_k\ln Q_k$$
for each $x\in [0,1)\setminus E^1_{Q_k,\delta}$. This implies \eqref{equ4}.

{\bf Case 2:} $\alpha+t\beta=0$.

In this case, one has
$$\|\alpha (x+n)-\gamma_1+t\langle \beta (x+n)-\gamma_2\rangle\|=\|-\gamma_1-t\gamma_2+mt+\frac{\alpha}{\beta}[\beta (x+n)-\gamma_2]\|,$$
where $m$ is as before. Let $\xi$ be either $\gamma_1+t\gamma_2$ or $\gamma_1+t\gamma_2+t$, depending on whether $m=0$ or $-1$.  From Lemma \ref{Le3}, we have that for each $x\in [0,1)$
\begin{eqnarray}
  \sum_{n=0\atop{\|\frac{\alpha}{\beta}[\beta (x+n)-\gamma_2]-\xi\|\ge \frac{1}{4q_{n_k}}}}^{Q_k-1}\frac1{\|\frac{\alpha}{\beta}[\beta (x+n)-\gamma_2]-\xi\|}
   &\leq& C \sum_{n=0\atop{\|\frac{\alpha}{\beta}n-\xi\|\ge \frac{1}{4q_{n_k}}}}^{CQ_k}\frac1{\|\frac{\alpha}{\beta}n-\xi\|} \nonumber\\
  &\leq& C Q_k\ln Q_k  .\label{equ5}
\end{eqnarray}

Let $S(\xi)$ (not depending on $x$) be the set of those $0\le n\le Q_k-1$ such that $\|\frac{\alpha}{\beta}[\beta (x+n)-\gamma_2]-\xi\|\le \frac{1}{4q_{n_k}}$ for some $x\in[0,1)$. It is easy to see that  $\#  S(\xi)\leq    C$  by \eqref{G29} and \eqref{G210}.
 For $n\in S(\xi)$, we will use an alternative estimate
$$A_n(x)\ge \|\alpha (x+n)-\gamma_1\|^2.$$
By \eqref{equ3}, there exists some set $E^{2}_{Q_k,\delta}\subset [0,1)$  such that  $|E^{2}_{Q_k,\delta}|\leq \frac{\delta}{2}$
and
\begin{eqnarray}
  \sum_{n=0\atop{\|\frac{\alpha}{\beta}[\beta (x+n)-\gamma_2]-\xi\| \le \frac{1}{4q_{n_k}}}}^{Q_k-1}\frac{1}{A_n(x) }&\leq& C \sum_{n=0\atop{\|\frac{\alpha}{\beta}[\beta (x+n)-\gamma_2]-\xi\|\le \frac{1}{4q_{n_k}}}}^{Q_k-1}\frac{1}{\|\alpha (x+n)-\gamma_1\|^2} \nonumber\\
   &\leq& C(\delta),\label{equ6}
\end{eqnarray}
for each $x\in [0,1)\setminus E^{2}_{Q_k,\delta}$.
Thus in this case,  \eqref{equ4}   follows from \eqref{equ5} and \eqref{equ6}.
Putting two cases together, we finish the proof.

\end{proof}

\begin{theorem}
\label{keyth}
Under the conditions  of Lemma \ref{Le5},
let $N_k$ be a sequence such that (i), (ii) and (iii) in Lemma \ref{Le4} hold. Define $P_k:=\frac{N_k}{\beta}$ for $\beta>0$ and $P_k:=-\frac{N_k}{\beta}$ for $\beta<0$. Given $\delta>0$,  there exists $E_{k,\delta}\subset [0,1)$  with  $|E_{k,\delta}|\le \delta$ such that  for each $x,y$ satisfying $x\in [0,1)\setminus E_{k,\delta}$ and $x=y+P_k$, we have
$$|\prod_{n=0}^{[P_k]-1}P(y+n)|\le C(\delta, s,C_0,C_1,C_2,\alpha,\beta) |\prod_{n=0}^{[P_k]-1}P(x+n)|.$$
\end{theorem}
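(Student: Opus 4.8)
The engine of the proof is a near-symmetry of $P$ under the real shift by $P_k$. The plan is to exploit that $\beta P_k=\pm N_k\in\Z$, so that the factor $C_2e^{2\pi i\beta x}$ is invariant when the argument is shifted by $P_k$; consequently, for every $z\in\R$,
\begin{equation*}
P(z)-P(z-P_k)=C_1e^{2\pi i\alpha z}\bigl(1-e^{-2\pi i\alpha P_k}\bigr).
\end{equation*}
Since $\alpha P_k=\pm N_k\frac{\alpha}{\beta}$, condition (ii) of Lemma \ref{Le4}, i.e. \eqref{DG5}, yields
\begin{equation*}
|P(z)-P(z-P_k)|\le 2\pi|C_1|\,\Bigl\|N_k\tfrac{\alpha}{\beta}\Bigr\|\le\frac{C(s)}{N_k\ln N_k}=:\varepsilon_k .
\end{equation*}
Taking $z=x+n$ and recalling $y=x-P_k$, this makes the two families of factors uniformly close, $|P(x+n)-P(y+n)|\le\varepsilon_k$ for all $n$. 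This is the only point at which the Diophantine hypothesis is used, and it is exactly what allows the weaker \eqref{DG2} in place of \eqref{DG1}.

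Next I would pass to logarithms, reducing the product bound to a summable one: it suffices to show $\sum_{n=0}^{[P_k]-1}\bigl|\ln|P(y+n)|-\ln|P(x+n)|\bigr|\le C$. Using $|\ln a-\ln b|\le|a-b|(a^{-1}+b^{-1})$ for positive reals, together with the bound above,
\begin{equation*}
\sum_{n=0}^{[P_k]-1}\left|\ln\frac{|P(y+n)|}{|P(x+n)|}\right|\le\varepsilon_k\sum_{n=0}^{[P_k]-1}\left(\frac{1}{|P(x+n)|}+\frac{1}{|P(y+n)|}\right).
\end{equation*}
Condition (i) of Lemma \ref{Le4} gives $m_{n_k}\le C(s)$, so $[P_k]=[N_k/|\beta|]$ obeys $\gamma q_{n_k}\le[P_k]\le\hat\gamma q_{n_k}$ with $\gamma,\hat\gamma$ depending only on $s,\beta$; hence Lemma \ref{Le5} applies with $Q_k=[P_k]$ to each of the two sums.

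The first sum is handled directly by Lemma \ref{Le5} (as $x\in[0,1)$), giving $\sum_n1/|P(x+n)|\le CP_k\ln P_k$ off a set of measure $\le\delta/2$. The second sum is the one delicate point, and I expect it to be the main obstacle: here $y=x-P_k\notin[0,1)$, so I would write $y+n=u+m$ with $u=\{x-P_k\}\in[0,1)$ and $m$ ranging over a window of $[P_k]$ \emph{consecutive} integers, the window's starting integer $[x-P_k]$ taking at most two values as $x$ runs through $[0,1)$. The proof of Lemma \ref{Le5} is insensitive to where this window of integers begins---it only partitions the range into $O(1)$ blocks of length $<q_{n_k}$ and invokes Lemmas \ref{Le3} and \ref{Le2}---so the identical argument bounds $\sum_m1/|P(u+m)|\le CP_k\ln P_k$ off a set of measure $\le\delta/2$ in $u$. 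Because $x\mapsto u=\{x-P_k\}$ is a measure-preserving rotation of $[0,1)$, this transfers to an exceptional set in $x$; I let $E_{k,\delta}$ be the union of these sets, so that $|E_{k,\delta}|\le\delta$. (Only hypotheses (i) and (ii) enter these estimates; (iii), i.e. \eqref{DG6}, is recorded for the later construction of \eqref{equ1}.)

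Finally I would combine. On $[0,1)\setminus E_{k,\delta}$ both sums are $\le CP_k\ln P_k$, and since $P_k=N_k/|\beta|$ and $\ln P_k\le C\ln N_k$,
\begin{equation*}
\sum_{n=0}^{[P_k]-1}\left|\ln\frac{|P(y+n)|}{|P(x+n)|}\right|\le\varepsilon_k\cdot CP_k\ln P_k\le\frac{C(s)}{N_k\ln N_k}\cdot\frac{C}{|\beta|}N_k\ln N_k\le C(\delta,s,C_0,C_1,C_2,\alpha,\beta).
\end{equation*}
Exponentiating yields $\prod_n|P(y+n)|\le C\prod_n|P(x+n)|$ (in fact the two-sided estimate), which is the assertion. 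The whole improvement over \cite{Dem10} is visible in this last line: matching $\varepsilon_k\sim(N_k\ln N_k)^{-1}$ against the factor $P_k\ln P_k\sim N_k\ln N_k$ produced by Lemma \ref{Le5} is precisely what turns the weaker condition \eqref{DG2} into a bounded product.
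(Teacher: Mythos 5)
Your proof is correct, and its engine --- the exact invariance of the $C_2e^{2\pi i\beta x}$ term under the shift by $P_k$ (since $\beta P_k=\pm N_k\in\Z$), the $O(1/(N_k\ln N_k))$ perturbation of the $C_1$ term from \eqref{DG5}, and the matching of this against the $Q_k\ln Q_k$ bound of Lemma \ref{Le5} --- is the same as the paper's. The one genuine difference is in how you convert the pointwise estimate $|P(x+n)-P(y+n)|\le\varepsilon_k$ into a product bound. You symmetrize via logarithms, using $|\ln a-\ln b|\le|a-b|(a^{-1}+b^{-1})$, which forces you to control \emph{both} $\sum_n 1/|P(x+n)|$ and $\sum_n 1/|P(y+n)|$; the second sum lives on a shifted window of integers, so you must observe that Lemma \ref{Le5} is translation-invariant in the window and transfer the exceptional set back through the rotation $x\mapsto\{x-P_k\}$. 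That observation is correct (Lemmas \ref{Le2} and \ref{Le3} only see the number of points and the length of the integer window, not its location), but it is extra work. The paper sidesteps it entirely with the one-sided inequality $|P(y+n)|\le|P(x+n)|+\varepsilon_k\le|P(x+n)|e^{\varepsilon_k/|P(x+n)|}$, so that only the sum at the good point $x\in[0,1)\setminus E_{k,\delta}$ is ever needed and Lemma \ref{Le5} applies verbatim. Your route buys a two-sided comparison of the products (which the statement does not require) at the cost of the window-shifting argument; the paper's route proves exactly the stated one-sided bound with less machinery.
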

\begin{proof}
We write $C$ for $C(\delta,s,C_0,C_1,C_2,\alpha,\beta)$ again. Without loss of generality, we only consider the case $\beta>0$.

By \eqref{DG5}  we have
$$|e^{2\pi i \alpha x}-e^{2\pi i \alpha y}|=|e^{2\pi iN_k\frac{\alpha}{\beta}}-1|\le \frac{C}{P_k\ln P_k}$$
and
$$|e^{2\pi i\beta x}-e^{2\pi i\beta y}|=0.$$
Thus, for each $n\in \Z^+$, one has
$$|P(y+n)|\le |P(x+n)|+\frac{C}{P_k\ln P_k}.$$
By the fact $1+x\le  e^{x}$ for   $x>0$, we get
$$|P(y+n)|\le |P(x+n)|e^{\frac{C}{P_k\ln P_k|P(x+n)|}},$$
and thus
$$|\prod_{n=0}^{[P_k]-1}P(y+n)|\le |\prod_{n=0}^{[P_k]-1}P(x+n)|e^{\frac{C}{P_k\ln P_k}\sum_{n=0}^{[P_k]-1}\frac1{|P(x+n)|}}.$$
Now Theorem \ref{keyth}  follows from Lemma \ref{Le5}.
\end{proof}

\begin{proof}[\bf Proof of Theorem \ref{mainth}]
Suppose Theorem \ref{mainth}  is not true.
As argued in Section 2, there  there exist some function $f$,   a positive Lebesgue  measure set $S\subset [0,1)$ and $d>0$ such that
\eqref{GF}, \eqref{GB}, \eqref{GT} and \eqref{GTI} hold.
By the continuity of Lebesgue measure,  there exists $\varepsilon=\varepsilon(S)>0$ such that
\begin{equation*}
    |S\cup(\{P_k\}+S)|\leq \frac{101}{100}|S|,
\end{equation*}
for $\{P_k\}\leq \varepsilon$.
Let $\delta=\frac{|S|}{100}$.
Then $(S\setminus E_{k,\delta})\cap (\{P_k\}+ S)\not=\emptyset$.
Let   $s=\varepsilon$.
Applying Theorem \ref{keyth} with  $s$ and $\delta$,  we have
\begin{equation}
\label{e8}
|\prod_{n=0}^{[P_k]-1}P(y+n)|\le C |\prod_{n=0}^{[P_k]-1}P(x+n)|,
\end{equation}
for each  $x\in [0,1)\setminus E_{k,\delta}$ and $x=y+P_k$.

Now we can
 choose $x_k\in S\setminus E_{k,\delta}$ such that    $x_k-\{P_k\}\in S$.
Let $y_k=x_k'-[P_k]=x_k-P_k$. Then
\begin{equation}\label{jfaa2}
\prod_{n=0}^{[P_k]-1}P(y_k+n)=\prod_{n=1}^{[P_k]}P(x_k'-n).
\end{equation}

By \eqref{e8} and \eqref{jfaa2}, we get
\begin{equation*}
|\prod_{n=1}^{[P_k]}P(x_k'-n)|\le C |\prod_{n=0}^{[P_k]-1}P(x_k+n)|.
\end{equation*}
Applying  \eqref{GT} and \eqref{GTI} with $x_k,x^\prime_k\in S$, one has
\begin{equation}
\label{jfaa3}
f(x_k+[P_k])=f(x_k){\prod_{n=0}^{[P_k]-1}P(x_k+n)},
\end{equation}
and
\begin{equation}
\label{jfaa4}
f(x_k'-[P_k])=f(x_k')({\prod_{n=1}^{[P_k]}P(x_k'-n)})^{-1}.
\end{equation}
By  \eqref{GB},  \eqref{jfaa3} and \eqref{jfaa4},
 we obtain that
\begin{equation*}
|f(x_k+[P_k])f(x_k'-[P_k])|\geq \frac{d^2}{C}.
\end{equation*}
This  is  contradicted by \eqref{GF}, if we let $k\to\infty$.
\end{proof}

 \section*{Acknowledgments}
I would like to thank Svetlana
Jitomirskaya for   introducing to me the HRT conjecture   and inspiring discussions on this subject.
The author was supported by the AMS-Simons Travel
Grant 2016-2018 and NSF DMS-1700314. This research was also partially supported by
 NSF DMS-1401204.

\footnotesize
 \bibliographystyle{abbrv} 

\begin{thebibliography}{10}

\bibitem{BB14}
J.~J. Benedetto and A.~Bourouihiya.
\newblock Linear independence of finite {G}abor systems determined by behavior
  at infinity.
\newblock {\em J. Geom. Anal.}, 25(1):226--254, 2015.

\bibitem{BS13}
M.~Bownik and D.~Speegle.
\newblock Linear independence of time-frequency translates of functions with
  faster than exponential decay.
\newblock {\em Bull. Lond. Math. Soc.}, 45(3):554--566, 2013.

\bibitem{BS15}
M.~Bownik and D.~Speegle.
\newblock Linear independence of time-frequency translates in {$\Bbb{R}\sp d$}.
\newblock {\em J. Geom. Anal.}, 26(3):1678--1692, 2016.

\bibitem{BS2010}
M.~Bownik, D.~Speegle.
\newblock Linear independence of {P}arseval wavelets.
\newblock {\em Illinois Journal of Mathematics}, 54(2):771--785, 2010.

\bibitem{Dem10}
C.~Demeter.
\newblock Linear independence of time frequency translates for special
  configurations.
\newblock {\em Math. Res. Lett.}, 17(4):761--779, 2010.

\bibitem{DG12}
C.~Demeter and S.~Z. Gautam.
\newblock On the finite linear independence of lattice {G}abor systems.
\newblock {\em Proc. Amer. Math. Soc.}, 141(5):1735--1747, 2013.

\bibitem{DZ12}
C.~Demeter and A.~Zaharescu.
\newblock Proof of the {HRT} conjecture for {$(2,2)$} configurations.
\newblock {\em J. Math. Anal. Appl.}, 388(1):151--159, 2012.

\bibitem{heil2006}
C.~Heil.
\newblock Linear independence of finite Gabor systems.
\newblock In {\em Harmonic analysis and applications}, pages 171--206.
  Springer, 2006.

\bibitem{HRT96}
C.~Heil, J.~Ramanathan, and P.~Topiwala.
\newblock Linear independence of time-frequency translates.
\newblock {\em Proc. Amer. Math. Soc.}, 124(9):2787--2795, 1996.

\bibitem{HRTHS}
C.~Heil and D.~Speegle.
\newblock The {HRT} conjecture and the zero divisor conjecture for the
  {H}eisenberg group.
\newblock In {\em Excursions in harmonic analysis. {V}ol. 3}, Appl. Numer.
  Harmon. Anal., pages 159--176. Birkh\"auser/Springer, Cham, 2015.

\bibitem{continue}
A.~Y. Khinchin.
\newblock Continued fractions.
\newblock 1997.

\bibitem{Lin99}
P.~A. Linnell.
\newblock von {N}eumann algebras and linear independence of translates.
\newblock {\em Proc. Amer. Math. Soc.}, 127(11):3269--3277, 1999.

\end{thebibliography}

\end{document}